\documentclass[11pt]{article}
\usepackage{amsmath,amssymb,amsthm,mathtools}
\usepackage{microtype}
\usepackage{hyperref}

\newtheorem{theorem}{Theorem}[section]
\newtheorem{lemma}[theorem]{Lemma}

\theoremstyle{definition}

\newcommand{\Z}{\mathbb{Z}}
\newcommand{\R}{\mathbb{R}}
\newcommand{\ind}{\mathbf{1}}

\title{A proof of the Holmes--Holroyd--Ram\'irez cyclic-product conjecture}
\author{%
  Paata Ivanisvili\thanks{Department of Mathematics, University of California, Irvine, Irvine, CA 92697, USA.
  \texttt{pivanisv@uci.edu}.}
  \and
  Shengtong Zhang\thanks{Department of Mathematics, Stanford University, Stanford, CA 94305, USA.
  \texttt{stzh1555@stanford.edu}.}
}
\date{}

\begin{document}
\maketitle

\begin{abstract}
We prove the cyclic-product conjecture of Holmes, Holroyd and Ram\'irez: among all circular arrangements of a decreasing positive sequence, the circular-symmetric ``greedy'' arrangement maximises every consecutive cyclic product sum $P_r$.
\end{abstract}

\section{The conjecture}

Let $m\ge 2$ and let $a_0\ge a_1\ge\cdots\ge a_{m-1}>0$. For a circular arrangement $z=(z_0,\dots,z_{m-1})$ of these numbers, write
\[
P_r(z)=\sum_{k=0}^{m-1}\prod_{j=0}^{r-1}z_{k+j},
\qquad r\in\{1,\dots,m\},
\]
with indices read modulo $m$. The \emph{greedy} (circular-symmetric) arrangement $a^G$ is defined by
\begin{equation}\label{eq:greedy}
 a^G_i=a_{2i},\qquad a^G_{m-1-i}=a_{2i+1},
 \qquad i=0,\dots,\bigl\lfloor m/2\bigr\rfloor-1,
\end{equation}
with $a^G_{\lfloor m/2\rfloor}=a_{m-1}$ when $m$ is odd.

\begin{theorem}\label{thm:main}
For every $r\in\{1,\dots,m\}$ and every circular arrangement $z$ of $(a_0,\dots,a_{m-1})$,
\[
P_r(a^G)\ge P_r(z).
\]
\end{theorem}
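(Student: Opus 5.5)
The plan is to argue by a reflection (two-point symmetrisation) scheme on the cycle, combined with a layer-cake reduction to $0$--$1$ sequences.

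\textbf{Reduction to threshold sequences.} Write $b_i=\log a_i$ and $a_i = \exp(b_i)$. Monotone rearrangement results of this kind usually reduce to indicator functions, but the product is not linear in level sets. So I would instead work directly with \emph{reflections}. Take a reflection axis $\sigma$ of the $m$-cycle, pairing positions $p\leftrightarrow \sigma(p)$, with one half-plane $H$ preferred: the half containing the greedy "top" position $0$. Given an arrangement $z$, define the polarised arrangement $z^\sigma$ by placing $\max(z_p,z_{\sigma p})$ on the position of the pair lying in $H$ and the minimum on the other.

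\textbf{Key step: polarisation does not decrease $P_r$.} Split $P_r(z)=\sum_W \prod_{i\in W} z_i$ over the windows $W$ of $r$ consecutive positions. Windows come in pairs $(W,\sigma W)$, or are $\sigma$-invariant. For an invariant window the product is unchanged, since it contains both members of each pair or fixed points. For a pair $(W,\sigma W)$, I would use the standard two-point inequality. Let $A=W\cap H$-side structure. Write the contributions as $\prod_{\text{pairs}} $ factors. The needed inequality is
\[
\prod_{i\in W}z^\sigma_i+\prod_{i\in \sigma W}z^\sigma_i\ \ge\ \prod_{i\in W}z_i+\prod_{i\in\sigma W}z_i .
\]
The window $W$, for $W$ chosen on the $H$-side, is "closer" to $H$: whenever $W$ contains the non-preferred member of a pair, it also contains the preferred one. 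This is true for arcs on a cycle versus a reflection, provided $W$ is the arc whose centre lies in $H$. Then the inequality is the classical rearrangement fact $xy+x'y'\le \max\cdot\max+\min\cdot\min$, applied iteratively over pairs, i.e.\ the product version of the Baernstein–Taylor two-point lemma. It holds since all entries are positive and the function $\prod$ is supermodular on $\R_{>0}^k$.

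\textbf{Convergence to greedy.} The greedy arrangement is exactly the one fixed by all polarisations whose preferred half contains position $0$, i.e.\ it is symmetric-decreasing about $0$ with ties broken in the alternating pattern of \eqref{eq:greedy}. The arrangements form a finite set. So starting from a maximiser $z$ of $P_r$, I would choose among maximisers one minimising a suitable potential, e.g.\ $\sum_i z_i\,d(i,0)$. If $z\ne a^G$ up to the tie convention, there is a reflection $\sigma$ and a pair with the larger value on the farther side. Polarising strictly decreases the potential and keeps $P_r$ maximal, a contradiction. Hence $a^G$ attains the max.

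\textbf{Main obstacle.} The delicate point is the combinatorics on the discrete cycle. The reflection axes of an $m$-cycle pass through vertices or edge midpoints depending on parity. I need, for every non-greedy $z$, an axis whose preferred side contains $0$ and which improves $z$. I also need the monotone-window property for all arcs of length $r$ with respect to that axis. I expect to check this by separating axes through vertices from axes through edges. The greedy order of \eqref{eq:greedy} (rank $2i$ at $i$, rank $2i+1$ at $-1-i$) corresponds to choosing the preferred side slightly off-centre, so the axes used must all have $0$ strictly inside $H$ and the point "$-1/2$" on the boundary-consistent side. Ties among equal $a_i$ are harmless, since swapping equal values changes nothing.
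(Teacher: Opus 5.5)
Your polarisation step is correct, and it is a genuinely different route from the paper's. The paper instead uses Ky Fan sums of the log-window-sums, majorisation of arc coverages via a Lipschitz-isoperimetric lemma, and comonotonicity. On the circle, $d(w,y)\le d(w,\sigma y)$ whenever $w,y$ lie in the same closed half. So a window centred in $H$ never contains the non-preferred member of a pair without the preferred one. After factoring out full pairs and fixed points, the two-point inequality reduces to $(X_S-Y_S)(X_{S^c}-Y_{S^c})\ge 0$.

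The gap is in the convergence step, at the point you flagged yourself. The reflection $t\mapsto -t$ has $0$ on its axis, so it cannot have $0$ strictly inside $H$. Its pairs $(p,-p)$ are equidistant from $0$, so neither your fixed-point characterisation nor the potential $\sum_i z_i\,d(i,0)$ detects them. Concretely, take $m=6$ and $z=(a_0,a_1,a_4,a_5,a_3,a_2)$. This $z$ is nonincreasing in $d(\cdot,0)$, so it minimises your potential and is fixed by every polarisation with $0$ strictly on the preferred side. Yet $a^G=(a_0,a_2,a_4,a_5,a_3,a_1)$ and $P_2(a^G)-P_2(z)=(a_1-a_2)(a_3-a_4)$, which is positive for strictly decreasing $a$. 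So the claim ``if $z\ne a^G$ there is a pair with the larger value on the farther side'' is false as stated. Your argument only shows that some arrangement nonincreasing in $d(\cdot,0)$ is a maximiser.

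The repair is to measure distances from $-\tfrac14$ instead of $0$.
Every reflection $t\mapsto c-t$ of $\Z/m\Z$ has its axis at points of $\tfrac12\Z$, so $-\tfrac14$ lies strictly on one side; declare that side preferred. The circular distances $d(i,-\tfrac14)$ are pairwise distinct and increase exactly along $0,-1,1,-2,2,\dots$, which is the order in \eqref{eq:greedy}.
Suppose $d(p,-\tfrac14)<d(q,-\tfrac14)$ but $z_p<z_q$. The same half-circle inequality forces $p$ onto the preferred side of the reflection $t\mapsto p+q-t$. That polarisation therefore strictly decreases $\sum_i z_i\,d(i,-\tfrac14)$ without decreasing $P_r$. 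Iterating terminates at the unique arrangement with no such pair, namely $a^G$.

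With this change your proof is complete and much shorter than the paper's. Run on $\log z$, it even yields Theorem~\ref{thm:A}: each nontrivial pair of window sums $(S_W,S_{\sigma W})$ is replaced by one with the same sum and a larger maximum. What the paper's route gives in addition is a static description of the extremal Ky Fan sums, attained by the arc coverages $c_I$. It also gives the Lipschitz-isoperimetric Theorem~\ref{thm:C}, which is of independent interest.
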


The cases $r\in\{1,m\}$ are tautological. Holmes--Holroyd--Ram\'irez formulated this statement as Conjecture~5 and proved it for $r\le3$ and $r\ge m-3$ (hence for all $r$ when $m\le7$) \cite[Conjecture~5 and Theorem~6]{HHR}. The argument below treats all $m,r$ uniformly.

\section{A more general majorisation}

For $u\in\R^m$, write $u^\downarrow$ for the decreasing rearrangement of $u$. For $u,v\in\R^m$, we say that $u$ \emph{majorises} $v$, and write $u\succ v$, if
\[
\sum_{i=1}^k u_i^\downarrow\ge \sum_{i=1}^k v_i^\downarrow
\quad\text{for }k=1,\dots,m-1,
\qquad
\sum_{i=1}^m u_i=\sum_{i=1}^m v_i.
\]

Set $x_i=\log a_i$, so $x_0\ge\cdots\ge x_{m-1}$, and let $x^G_i=\log a^G_i$. For a circular arrangement $b=(b_0,\dots,b_{m-1})$ of $(x_0,\dots,x_{m-1})$, write
\[
S_k(b):=\sum_{j=0}^{r-1}b_{k+j},\qquad
S(b):=\bigl(S_0(b),\dots,S_{m-1}(b)\bigr),
\qquad
S^\downarrow(b):=S(b)^\downarrow.
\]
If $z_i=e^{b_i}$, then
\[
P_r(z)=\sum_k e^{S_k(b)},
\qquad
P_r(a^G)=\sum_k e^{S_k(x^G)}.
\]
Thus Theorem~\ref{thm:main} is the case $\varphi(u)=e^u$ of the following statement.

\begin{theorem}\label{thm:A}
For every $r\in\{1,\dots,m\}$ and every circular arrangement $b$ of $(x_0,\dots,x_{m-1})$,
\[
S^\downarrow(x^G)\succ S^\downarrow(b).
\]
Consequently, by Karamata's inequality,
\[
\sum_k\varphi\bigl(S_k(x^G)\bigr)
\ge
\sum_k\varphi\bigl(S_k(b)\bigr)
\]
for every convex $\varphi:\R\to\R$.
\end{theorem}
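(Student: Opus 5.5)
The plan is to reduce the majorisation to a rearrangement inequality for indicator functions on the cyclic group $\Z_m$. Throughout, let $W_k=\{k,k+1,\dots,k+r-1\}\subset\Z_m$ denote the $k$-th window.

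\textbf{Step 1 (variational form).} For $1\le p\le m$, the sum of the $p$ largest entries of $S(b)$ equals
\[
\max_{|K|=p}\sum_{k\in K}S_k(b)=\max_{|K|=p}\sum_{i\in\Z_m}c_K(i)\,b_i,
\qquad c_K(i):=\#\{k\in K:\ i\in W_k\}.
\]
The total $\sum_i c_K(i)=pr$ does not depend on $K$, and $\sum_k S_k(b)=r\sum_i x_i$ for every arrangement, so the equality case $p=m$ of the majorisation is automatic.

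Now fix $b$ and a set $K$ with $|K|=p$. By the rearrangement inequality, $\sum_i c_K(i)b_i\le\sum_i c_K^\downarrow(i)\,x_i$, where $c_K^\downarrow$ is the decreasing rearrangement of $c_K$ and $x$ is indexed decreasingly. It therefore suffices to produce one set $K'$, depending only on $p$, with two properties:
\begin{itemize}
\item[(a)] On the greedy arrangement, $c_{K'}$ and $x^G$ are similarly ordered, so that $\sum_i c_{K'}(i)\,x^G_i=\sum_i c_{K'}^\downarrow(i)\,x_i$.
\item[(b)] $c_{K'}^\downarrow\succ c_K^\downarrow$ for every $K$ with $|K|=p$.
\end{itemize}
Given (a) and (b), Abel summation against the nonnegative gaps $x_i-x_{i+1}$, together with the equal totals $pr$, yields $\sum_i c_K^\downarrow(i)\,x_i\le\sum_i c_{K'}^\downarrow(i)\,x_i$. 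Maximising over $K$ then gives the partial-sum inequalities in Theorem~\ref{thm:A}. The Karamata consequence is standard.

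\textbf{Step 2 (choice of $K'$).} Take $K'$ to be $p$ consecutive windows, centred as nearly as possible on the "top" of the greedy arrangement, that is, on positions $0$ and $m-1$, where $a_0$ and $a_1$ sit. Its coverage profile $c_{K'}$ is a trapezoid: it rises $1,2,\dots$ up to $\min(p,r)$ (capped by the wrap-around when $p+r-1>m$), stays flat, then falls symmetrically. The greedy arrangement \eqref{eq:greedy} places the values in decreasing order alternately to the right and left of this centre. Hence a symmetric unimodal profile is similarly ordered with $x^G$, which gives (a). The parity of $p+r$ and of $m$ decides whether the trapezoid's centre is a point or an edge. I would check both cases, using the freedom in breaking ties inside the flat part and between equal heights on the two sides.

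\textbf{Step 3 (the main obstacle: property (b)).} The sum of the $j$ largest values of $c_K$ is
\[
\max_{|J|=j}\sum_{k\in K}|W_k\cap J|=\max_{|J|=j}\sum_{s+u\equiv t}\ind_K(s)\,\ind_{W_0}(u)\,\ind_J(t),
\]
a trilinear form on $\Z_m$. I would invoke the Hardy--Littlewood--P\'olya circular rearrangement theorem (HLP, Theorem~371). It states that $\sum_{s+u+t\equiv0}f(s)g(u)h(t)$ does not decrease when $f$, $g$, $h$ are replaced by their symmetric decreasing (circular) rearrangements; here one writes $h(t)=\ind_J(-t)$ to match the form. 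The window $W_0$ is already an interval, and $\ind_K$, $\ind_J$ become indicators of centred intervals. Hence the form is maximised when $K$ and $J$ are both intervals. When $K$ is an interval, the maximum over $J$ is the sum of the top $j$ values of the trapezoid $c_{K'}$, which is exactly (b).

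The delicate points are the following. First, one must check that the circular-rearrangement theorem applies to $0/1$ indicators of even and odd cardinalities with consistent centring conventions; otherwise a direct exchange argument on $\Z_m$ is needed. Second, the wrap-around regime $p+r>m$ must be handled, where the trapezoid's height saturates. If HLP~371 proves awkward there, I would fall back on a shifting argument: repeatedly move an extreme element of $K$ towards the bulk, and verify that every top-$j$ coverage sum weakly increases.
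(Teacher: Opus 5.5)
Your Steps 1 and 2 match the paper's reduction: Ky Fan sums written as $\max_{|K|=p}\langle c_K,b\rangle$, then the rearrangement inequality, Abel summation, and comonotonicity of a suitably centred arc's coverage with $x^G$. The gap is Step 3. Property (b) is exactly the paper's Theorem~\ref{thm:B}, which is the real content of the proof, and your justification for it does not hold.

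The three-sequence rearrangement inequality in Hardy--Littlewood--P\'olya concerns finitely supported sequences on $\Z$ with $r+s+t=0$ in the integers, not residues mod $m$. Its cyclic analogue is false for composite $m$. In $\Z/4\Z$ take $f=g=h=\ind_{\{0,2\}}$: there are $4$ triples with $s+u+t\equiv0$, whereas any three $2$-arcs give at most $3$. Your $g=\ind_{W_0}$ is already an arc, so this does not contradict the special case you need; that case is true, being Theorem~\ref{thm:B}. But it shows there is no general circular theorem to quote, and the failure sits exactly in the wrap-around you flag. Your fallback is not yet an argument either. On $\Z/m\Z$ there is no canonical extreme element of $K$, and the claim that some move weakly increases every top-$j$ sum of $c_K$ is essentially what has to be proved; you give no mechanism for it.

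The missing idea is that the structure of $K$ is irrelevant. By Lemma~\ref{lem:lip}, $c_K$ is an integer sequence on the cycle with $|c_K(t+1)-c_K(t)|\le1$. It takes values in $[\mu,M]$, where $\mu=\max(0,p+r-m)$ and $M=\min(p,r)$, and has sum $pr$. An arc's coverage is the steepest such profile, the steep trapezoid $\tau$, and the paper proves that $\tau$ majorises every such sequence $v$.

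If $v$ attains both $\mu$ and $M$, the two arcs from a maximiser to a minimiser force every intermediate value to occur at least twice. So the superlevel counts $n_j(v)=|\{v\ge\mu+j\}|$ drop by at least $2$ per level, while those of $\tau$ drop by exactly $2$. Since the totals are equal, this gives $n(v)\succ n(\tau)$. Concavity of $u\mapsto\min(k,u)$ and the layer-cake formula $\sum_{i\le k}v_i^\downarrow=k\mu+\sum_j\min(k,n_j(v))$ then turn this into $\tau\succ v$.

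If $v$ misses an endpoint, the same count on its actual range, together with the padded comparison of Lemma~\ref{lem:padded}, finishes the proof. This argument uses only the cycle-graph structure, so it does not depend on the arithmetic of $m$. That arithmetic is exactly where a rearrangement-inequality route breaks down.
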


The rest of the note proves Theorem~\ref{thm:A}.

\section{Ky Fan sums and coverage}

For $s\in\{0,\dots,m\}$ write
\[
\Phi_{r,s}(b)=\text{sum of the $s$ largest coordinates of $S(b)$}.
\]
Theorem~\ref{thm:A} is equivalent to $\Phi_{r,s}(x^G)\ge\Phi_{r,s}(b)$ for all $s$, together with $\sum_kS_k=r\sum_ix_i$.

For functions $f,h:\Z/m\Z\to\R$, their circular convolution is
\[
(f*h)(t)=\sum_{u\in\Z/m\Z}f(u)h(t-u).
\]
Let $B\subset\Z/m\Z$ with $|B|=s$, and put $V_t=\{t,t-1,\dots,t-r+1\}$. Then
\[
\sum_{k\in B}S_k(b)=\sum_{t=0}^{m-1}c_B(t)b_t,
\]
where the \emph{coverage} $c_B:\Z/m\Z\to\Z$ is the circular convolution
\begin{equation}\label{eq:coverage}
 c_B(t)=|B\cap V_t|
 =\bigl(\ind_B*\ind_{\{0,\dots,r-1\}}\bigr)(t).
\end{equation}
Thus
\begin{equation}\label{eq:Phi}
\Phi_{r,s}(b)
=\max_{|B|=s}\langle c_B,b\rangle
\le\max_{|B|=s}\langle c_B^\downarrow,x^\downarrow\rangle,
\end{equation}
the last step being the classical rearrangement inequality.

\begin{lemma}\label{lem:lip}
For every $B\subset\Z/m\Z$ with $|B|=s$,
\[
|c_B(t+1)-c_B(t)|\le1,
\qquad
\mu\le c_B(t)\le M,
\qquad
\sum_tc_B(t)=sr,
\]
where $\mu=\max(0,s+r-m)$ and $M=\min(s,r)$.
\end{lemma}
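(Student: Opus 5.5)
All three assertions follow directly from the description $c_B(t)=|B\cap V_t|$ in \eqref{eq:coverage}, where $V_t=\{t,t-1,\dots,t-r+1\}$ is a window of $r$ consecutive residues. I will handle them in the order sum, Lipschitz bound, range. Throughout, $1\le r\le m$, so the $r$ elements of $V_t$ are distinct.

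For the total, count pairs by the element of $B$ rather than by the window. Since $k\in V_t$ exactly when $t\in\{k,k+1,\dots,k+r-1\}$, each $k\in B$ lies in exactly $r$ windows. Hence $\sum_t c_B(t)=\sum_{k\in B}r=sr$. Equivalently, the total mass of a convolution is the product of the total masses, $s\cdot r$.

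For the Lipschitz bound, compare consecutive windows: $V_{t+1}=(V_t\setminus\{t-r+1\})\cup\{t+1\}$. When $r<m$ the two modified points are distinct, so passing from $t$ to $t+1$ removes at most one element of $B$ and adds at most one. Therefore $c_B(t+1)-c_B(t)=\ind_B(t+1)-\ind_B(t-r+1)\in\{-1,0,1\}$. When $r=m$ we have $V_t=\Z/m\Z$ for every $t$, so $c_B\equiv s$ is constant and the difference is $0$.

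For the range, the upper bound is immediate: $c_B(t)=|B\cap V_t|\le\min(|B|,|V_t|)=\min(s,r)=M$. For the lower bound, inclusion--exclusion inside $\Z/m\Z$ gives
\[
|B\cap V_t|=|B|+|V_t|-|B\cup V_t|\ge s+r-m,
\]
and $|B\cap V_t|\ge0$ trivially, so $c_B(t)\ge\max(0,s+r-m)=\mu$.

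The only point needing care is the degenerate case $r=m$ in the Lipschitz step, where the removed point $t-r+1$ coincides with the added point $t+1$ modulo $m$. That case is handled separately above.
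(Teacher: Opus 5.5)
Your proof is correct and follows the paper's argument: the identity $c_B(t+1)-c_B(t)=\ind_B(t+1)-\ind_B(t-r+1)$ for the Lipschitz bound, $|B\cap V_t|\ge |B|+|V_t|-m$ for the lower bound, and the count of each element of $B$ in exactly $r$ windows for the sum. Your separate treatment of $r=m$ is harmless but not needed, since the identity already gives $0$ there because $t-r+1\equiv t+1 \pmod m$.
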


\begin{proof}
The identity $c_B(t+1)-c_B(t)=\ind_B(t+1)-\ind_B(t-r+1)$ gives the Lipschitz bound. The upper bound is immediate, while
\[
|B\cap V_t|\ge|B|+|V_t|-m
\]
gives the lower bound. Finally, each element of $B$ lies in exactly $r$ of the windows $V_t$.
\end{proof}

For $s\in\{0,m\}$, the corresponding Ky Fan inequalities are equalities, while Theorem~\ref{thm:A} is immediate for $r\in\{1,m\}$. We therefore assume below that
\[
0<s<m,
\qquad
1<r<m.
\]

We now identify the candidate extremal coverage vector. By an $s$-arc we mean a set of $s$ consecutive residues in $\Z/m\Z$. Let $I\subset\Z/m\Z$ be an $s$-arc. Its coverage $c_I$ has the steepest profile permitted by the bounds and the Lipschitz condition in Lemma~\ref{lem:lip}: it is a circular trapezoid. Indeed, $c_I(t)=|I\cap V_t|$ is the overlap of a fixed $s$-arc with a moving $r$-arc. As the latter moves around the circle, the overlap increases by one at each step until one arc is contained in the other, remains maximal, then decreases by one at each step until the minimum overlap is reached. Thus $c_I$ increases with slope $+1$ from $\mu$ to $M$, has a plateau of length
\[
H=|s-r|+1
\]
at $M$, decreases with slope $-1$ to $\mu$, and has a plateau of length
\[
L=|m-s-r|+1
\]
at $\mu$. The upper plateau counts the positions in which the smaller arc lies inside the larger, while the lower plateau counts the positions for which the minimum overlap persists. The two ramps have length $M-\mu-1$ each, since
\[
M-\mu=\min(s,r,m-s,m-r).
\]
In particular $c_I$ attains both $\mu$ and $M$. Writing $\Delta=M-\mu$, the plateau lengths and the sum satisfy
\[
H+L+2(\Delta-1)=m,
\qquad
sr=m\mu+\Delta(H+\Delta-1).
\]
Thus $c_I$ is exactly the abstract steep $[\mu,M]$-trapezoid of sum $sr$ introduced below.

\begin{theorem}\label{thm:B}
If $I$ is an $s$-arc and $B$ is an arbitrary $s$-set, then $c_I\succ c_B$.
\end{theorem}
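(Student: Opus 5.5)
Theorem~\ref{thm:B} should follow from Lemma~\ref{lem:lip} alone. The plan is to prove the stronger abstract statement: if $c:\Z/m\Z\to\Z$ is any function with $|c(t+1)-c(t)|\le 1$, $\mu\le c\le M$ and $\sum_t c(t)=sr$, then the steep $[\mu,M]$-trapezoid $T$ of the same sum majorises $c$. Since $c_I=T$ and $c_B$ satisfies these constraints, the theorem follows. The sums agree, so it suffices to show that for each $k$ the sum of the $k$ largest values of $c$ is at most that of $T$.

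\textbf{Reformulate via level sets.} For integer-valued functions, the Ky Fan condition $\sum_{i\le k}c^\downarrow_i\le\sum_{i\le k}T^\downarrow_i$ for all $k$ is equivalent, given equal totals, to
\[
\sum_t (c(t)-\lambda)_+\le\sum_t (T(t)-\lambda)_+\quad\text{for every integer }\lambda .
\]
Writing $N_j(c)=|\{t:c(t)\ge j\}|$, this becomes $\sum_{j>\lambda}N_j(c)\le\sum_{j>\lambda}N_j(T)$ for all $\lambda$. For $T$, the level counts are explicit: $N_j(T)=H+2(M-j)$ for $\mu<j\le M$, $N_\mu(T)=m$, and $N_j(T)=0$ for $j>M$.

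\textbf{Key structural inequality.} The Lipschitz condition forces the level counts of $c$ to shrink by at least two per level. Suppose $\{c\ge j+1\}$ is nonempty and $\{c\ge j\}\ne\Z/m\Z$. Each maximal arc of $\{c\ge j\}$ contains a maximal arc of $\{c\ge j+1\}$ with at least one extra point on each side, because stepping from value $\ge j+1$ to a value $<j$ would require a jump of at least two. Hence
\[
N_{j}(c)\ge N_{j+1}(c)+2 \quad\text{whenever } 0<N_{j+1}(c)\text{ and } N_j(c)<m .
\]
So above the first level at which $N_j(c)<m$, the sequence $N_j(c)$ decreases by at least $2$ per step until it vanishes. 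It also never exceeds $m$, and $\sum_j N_j(c)$ is fixed by the sum constraint.

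\textbf{Compare.} Fix $\lambda$ and set $A(\lambda)=\sum_{j>\lambda}N_j$. For $\lambda\ge M$, both sides are $0$. For $\lambda<\mu$, both sides equal $sr-m\lambda$.

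For $\mu\le\lambda<M$, I would argue by extremality. Let $p$ be the largest $j$ with $N_j(c)=m$, so $p\ge\mu$. The upper levels satisfy $N_j(c)\le N_{p+1}(c)-2(j-p-1)$, and $N_j(c)=0$ for $j>M$. The equation $sr=\sum_j N_j$ then pins down $N$ in terms of these free parameters. The task is to maximise the tail $\sum_{j>\lambda}N_j$ subject to
\begin{itemize}
\item[(i)] $N_j\le m$;
\item[(ii)] decrease by at least $2$ above the full levels;
\item[(iii)] support in $j\le M$;
\item[(iv)] fixed total.
\end{itemize}
Mass should be pushed as high as possible, which means taking $N_M$ as large as allowed and making the decrease exactly $2$ downward from level $M$. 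This forces the profile $H+2(M-j)$ with the full plateau $N_\mu=m$, that is, exactly $T$. One needs the total to match, which holds because $T$ has sum $sr$ with $H+2(\Delta-1)+L=m$.

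To make the exchange argument rigorous, I would show that any feasible profile not equal to $N(T)$ has some level $j_1>\lambda$ below its $T$-value, compensated by an excess at some level $j_2\le\lambda$. Because constraint (ii) caps increases going upward, the tail can only lose mass relative to $T$. A clean route is to prove by downward induction on $\lambda$ that $A_c(\lambda)\le A_T(\lambda)$. If $A_c(\lambda)>A_T(\lambda)$ while $A_c(\lambda+1)\le A_T(\lambda+1)$, then $N_{\lambda+1}(c)>N_{\lambda+1}(T)$. Propagating downward via $N_j(c)\ge\min(m,N_{j+1}(c)+2)$, which is the structural inequality again, yields $N_j(c)>N_j(T)$ or $N_j(c)=m$ for all $j\le\lambda+1$. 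Summing gives total mass exceeding $sr$, a contradiction, since at every level $c$ would have at least $T$'s count and strictly more at one.

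\textbf{Main obstacle.} The delicate step is the downward propagation. The structural inequality only applies while $N_{j+1}(c)>0$ and $N_j(c)<m$, and the comparison must handle the lower plateau, where $T$ already reaches $m$, and the boundary cases $N_\mu(T)=m$ versus $N_{\mu+1}(T)=H+2(\Delta-1)=m-L$. I would check carefully that $L\ge1$ makes the bound $N_\mu(c)\le m$ consistent at the switch, and that $H\ge1$ handles the top level $M$.
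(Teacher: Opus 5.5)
Your proposal is correct. Once the heuristic ``push the mass as high as possible'' paragraph is set aside, the downward-induction argument is a genuinely different and shorter route than the paper's.

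The paper proves the abstract statement (Theorem~\ref{thm:C}) by a case split:
\begin{itemize}
\item If $c_B$ attains both $\mu$ and $M$, the differences $n_j(v)-n_j(\tau)$ are nonincreasing with zero sum (Lemma~\ref{lem:full-amp}).
\item Otherwise it passes to the actual range $[\mu',M']$, compares with a local trapezoid profile, and pads with blocks of $m$'s and $0$'s. This needs the algebraic identity \eqref{eq:E} (Lemmas~\ref{lem:padded} and~\ref{lem:smaller-box}).
\end{itemize}
In both cases majorisation is then recovered from Lemma~\ref{lem:clip} and the layer-cake formula.

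Your tail condition $\sum_{j>\lambda}N_j(c)\le\sum_{j>\lambda}N_j(T)$ is, after the shift $N_{\mu+j}=n_j$, exactly the paper's $n(v)\succ n(\tau)$. Your threshold characterisation of majorisation plays the role of Lemma~\ref{lem:clip}.

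The real gain is the extremal-violation step. At the largest violating $\lambda$ one has $N_{\lambda+1}(c)>N_{\lambda+1}(T)\ge0$. Then $N_{j+1}(c)>N_{j+1}(T)$ forces $N_j(c)>N_j(T)$ or $N_j(c)=m$, because $N_j(T)=N_{j+1}(T)+2$ for $\mu<j<M$. This dichotomy absorbs both the case where $c$ misses $\mu$ (full levels) and the case where it misses $M$ (empty top levels). So no case split and no padded comparison are needed. What the paper's route buys is an explicit intermediate comparison $\widetilde n(v)\succ n^\sharp\succ n(\tau)$ with a local trapezoid profile, at the cost of the computation \eqref{eq:E}.

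Two points of bookkeeping need tightening.
\begin{enumerate}
\item The propagation inequality $N_j(c)\ge\min(m,N_{j+1}(c)+2)$ is only valid when $N_{j+1}(c)>0$. This holds along the induction for the reason just given. The boundary worries in your last paragraph then dissolve: when $N_j(c)=m$ you trivially have $N_j(c)\ge N_j(T)$, and that is all the final count needs.
\item Your closing sentence should not assert a levelwise inequality ``at every level'', since above $\lambda+1$ you have no control. Use the levelwise bound only for $\mu<j\le\lambda$ and the assumed strict inequality for the tail:
\[
Q=\sum_{\mu<j\le\lambda}N_j(c)+A_c(\lambda)>\sum_{\mu<j\le\lambda}N_j(T)+A_T(\lambda)=Q,
\qquad Q=sr-m\mu .
\]
\end{enumerate}
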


We first derive Theorem~\ref{thm:A} from Theorem~\ref{thm:B} and Lemma~\ref{lem:comono}. Sections~5 and~6 then prove Theorem~\ref{thm:B} through a more general Lipschitz-isoperimetric statement.

\begin{lemma}\label{lem:pair}
If $u\succ v$ in $\R^m$ and $y\in\R^m$ is decreasing, then
\[
\langle u^\downarrow,y\rangle\ge\langle v^\downarrow,y\rangle.
\]
\end{lemma}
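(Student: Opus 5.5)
This is the standard Abel summation argument: rewrite both inner products in terms of partial sums, then compare term by term using the majorisation inequalities. Write $U_k=\sum_{i=1}^k u_i^\downarrow$ and $V_k=\sum_{i=1}^k v_i^\downarrow$ for $k=1,\dots,m$, with $U_0=V_0=0$. The hypothesis $u\succ v$ says exactly that $U_k\ge V_k$ for $1\le k\le m-1$ and $U_m=V_m$.

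The first step is summation by parts. Since $u_i^\downarrow=U_i-U_{i-1}$, we get
\[
\langle u^\downarrow,y\rangle=\sum_{i=1}^m (U_i-U_{i-1})y_i=\sum_{k=1}^{m-1}U_k\,(y_k-y_{k+1})+U_m y_m ,
\]
and the same identity holds with $V$ in place of $U$.

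Subtracting the two identities gives
\[
\langle u^\downarrow,y\rangle-\langle v^\downarrow,y\rangle=\sum_{k=1}^{m-1}(U_k-V_k)(y_k-y_{k+1})+(U_m-V_m)y_m .
\]
The last term vanishes because the total sums agree. In each remaining term, $U_k-V_k\ge0$ by majorisation and $y_k-y_{k+1}\ge0$ because $y$ is decreasing. Hence the difference is nonnegative, which is the claim.

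Nothing here is a real obstacle. The only point to watch is the boundary term: the equality $U_m=V_m$ is what lets us drop it, since $y_m$ itself may have either sign.
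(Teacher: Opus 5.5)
Your proof is correct and is the same argument as the paper's. Both use summation by parts to write $\langle u^\downarrow,y\rangle$ as $y_m U_m+\sum_{k=1}^{m-1}(y_k-y_{k+1})U_k$, then drop the boundary term by $U_m=V_m$ and compare the remaining terms using $U_k\ge V_k$ and $y_k-y_{k+1}\ge 0$.
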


\begin{proof}
Summation by parts gives
\[
\sum_{k=1}^my_ku_k^\downarrow
=y_m\sum_{k=1}^mu_k^\downarrow
+\sum_{j=1}^{m-1}(y_j-y_{j+1})\sum_{k=1}^ju_k^\downarrow.
\]
The total sums of $u$ and $v$ agree, each coefficient $y_j-y_{j+1}$ is nonnegative, and the partial sums of $u^\downarrow$ dominate those of $v^\downarrow$.
\end{proof}

Choose $I$ as in Lemma~\ref{lem:comono}. Granting Theorem~\ref{thm:B} and Lemmas~\ref{lem:pair} and~\ref{lem:comono},
\[
\begin{aligned}
\Phi_{r,s}(b)
&=\max_{|B|=s}\langle c_B,b\rangle\\
&\le\max_{|B|=s}\langle c_B^\downarrow,x^\downarrow\rangle\\
&\le\langle c_I^\downarrow,x^\downarrow\rangle
&&\text{by Theorem~\ref{thm:B} and Lemma~\ref{lem:pair}}\\
&=\langle c_I,x^G\rangle
&&\text{by Lemma~\ref{lem:comono}}\\
&\le\Phi_{r,s}(x^G).
\end{aligned}
\]
This proves Theorem~\ref{thm:A}.

\section{Comonotonicity}\label{sec:comono}

For $c\in\frac12\Z/m\Z$ and $t\in\Z/m\Z$, let
\[
d_c(t)=\min_{k\in\Z}|t-c+km|
\]
be the circular distance from $t$ to $c$, using any representatives of $t$ and $c$. A function $f:\Z/m\Z\to\R$ is \emph{circularly symmetrically decreasing about the centre $c$} if $f(t)$ depends only on $d_c(t)$ and is nonincreasing as $d_c(t)$ increases. If $c$ is a half-integer, the centre is the seam between two adjacent residues.

\begin{lemma}[Convolution preserves circular symmetry]\label{lem:conv-sym}
If $f,h:\Z/m\Z\to[0,\infty)$ are circularly symmetrically decreasing about centres $c_f$ and $c_h$, respectively, then $f*h$ is circularly symmetrically decreasing about $c_f+c_h$.
\end{lemma}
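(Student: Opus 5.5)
The plan is a layer-cake decomposition: write $f$ and $h$ as nonnegative combinations of indicators of arcs centred at $c_f$ and $c_h$, and use bilinearity of convolution to reduce to the case of two arcs.

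\textbf{Step 1 (layer cake).} Since $f\ge0$ depends only on $d_{c_f}(t)$ and is nonincreasing in it, every superlevel set $\{f\ge\lambda\}$ with $\lambda>0$ is either $\emptyset$, all of $\Z/m\Z$, or an arc $A$ whose midpoint is $c_f$. Here the midpoint is an integer when $|A|$ is odd and a half-integer when $|A|$ is even. The full circle also counts as an arc symmetric about $c_f$, since its indicator is constant. Listing the distinct values $\lambda_1>\lambda_2>\dots>\lambda_p>0$ of $f$ and setting $\lambda_{p+1}=0$ gives
\[
f=\sum_{i}(\lambda_i-\lambda_{i+1})\ind_{\{f\ge\lambda_i\}}.
\]
Here each coefficient is nonnegative and each set is a symmetric arc about $c_f$. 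The same holds for $h$ about $c_h$. By bilinearity,
\[
f*h=\sum_{i,j}\alpha_i\beta_j\,\ind_{A_i}*\ind_{C_j},\qquad \alpha_i,\beta_j\ge0.
\]
A nonnegative combination of functions that are all circularly symmetrically decreasing about the same centre $c$ is again such a function: each summand depends only on $d_c(t)$ and is nonincreasing in it. So it suffices to treat $f=\ind_A$ and $h=\ind_C$, with $A$ an arc centred at $c_f$ and $C$ an arc centred at $c_h$.

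\textbf{Step 2 (two arcs).} Here $(\ind_A*\ind_C)(t)=|A\cap(t-C)|$. The set $t-C$ is an arc of length $|C|$ centred at $t-c_h$.

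\emph{Symmetry.} Reflection $u\mapsto 2c_f-u$ preserves $A$ and maps $t-C$ to an arc of the same length centred at $2c_f-t+c_h$. That centre equals $t'-c_h$ with $t'=2(c_f+c_h)-t$, which is the reflection of $t$ about $c=c_f+c_h$. Hence the overlap takes the same value at $t$ and at $t'$, so $f*h$ depends only on $d_c(t)$.

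\emph{Monotonicity.} The overlap of two circular arcs of lengths $a=|A|$ and $b=|C|$ depends only on the circular offset $\delta$ between their centres, where $\delta=d_c(t)$. This is exactly the trapezoid computation already carried out for $c_I$ before Theorem~\ref{thm:B}. As $\delta$ runs from $0$ to $m/2$, the overlap stays at $\min(a,b)$ while one arc contains the other, then decreases by one per unit step, then stays at its minimum $\max(0,a+b-m)$. In particular it is nonincreasing in $\delta$ on $[0,m/2]$. Since $\delta\mapsto$ overlap is even and $m$-periodic, this gives the required monotonicity in $d_c(t)$.

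\textbf{Main obstacle: parity and wrap-around bookkeeping.} I expect the only delicate point to be the monotonicity claim in Step 2.

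\emph{Parity.} When $a$ and $b$ have different parities, $c$ is a half-integer and $\delta$ ranges over half-integers. I would handle this by writing the overlap explicitly as
\[
\max\Bigl(\max(0,a+b-m),\ \min\bigl(a,b,\tfrac{a+b}{2}-\delta\bigr)\Bigr)
\]
and checking it on the line before wrapping. On the line it is $\min(a,b,(a+b)/2-\delta)$ clipped at $0$, valid for $\delta\le (a+b)/2$.

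\emph{Wrap-around.} When $a+b>m$, the arcs meet on both sides. I would then use $|A\cap X|=|A|-|A\cap X^c|$: the complement $X^c$ of $X=t-C$ is an arc of length $m-b$ centred antipodally, which reduces to the non-wrapping case with $a+(m-b)\le m$ after possibly swapping roles.
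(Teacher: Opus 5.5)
Your proposal is correct and follows the paper's proof. You decompose $f$ and $h$ by layer cake into nonnegative combinations of indicators of arcs centred at $c_f$ and $c_h$, reduce by bilinearity to two arcs, and observe that $|A\cap(t-C)|$ is a possibly degenerate circular trapezoid, symmetric and nonincreasing about $c_f+c_h$; your reflection argument, explicit overlap formula, and complement trick for $a+b>m$ simply fill in details the paper leaves implicit.
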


\begin{proof}
Such a function is a nonnegative linear combination of indicators of arcs with the same centre, with coefficients given by the successive drops on the distance-rings. If $A$ and $C$ are arcs centred at $c_A$ and $c_C$, respectively, then
\[
(\ind_A*\ind_C)(t)=|A\cap(t-C)|.
\]
As $t$ moves away from $c_A+c_C$, this overlap is nonincreasing and changes by at most one at each step. Its profile is therefore a possibly degenerate circular trapezoid, circularly symmetrically decreasing about $c_A+c_C$. The claim follows by bilinearity.
\end{proof}

\begin{lemma}[Comonotonicity]\label{lem:comono}
Place $x^G$ according to \eqref{eq:greedy}. Let
\[
\sigma=-\Bigl\lfloor\frac{r+s-1}{2}\Bigr\rfloor\pmod m,
\qquad
I=\{\sigma,\sigma+1,\dots,\sigma+s-1\},
\]
where the elements of $I$ are read modulo $m$.
Then $x^G$ and $c_I$ are comonotone:
\[
(c_I(p)-c_I(q))(x^G_p-x^G_q)\ge0
\qquad\text{for all }p,q.
\]
Consequently
\[
\langle c_I,x^G\rangle=\langle c_I^\downarrow,x^\downarrow\rangle.
\]
\end{lemma}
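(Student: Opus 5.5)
We show that both $x^G$ and $c_I$ are circularly symmetrically decreasing about one common centre. Two such functions are automatically comonotone, since each is a nonincreasing function of the same distance $d_c$.

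\emph{Step 1: the centre of $x^G$.} By \eqref{eq:greedy}, position $i$ carries $x_{2i}$ and position $m-1-i$ carries $x_{2i+1}$. Thus the residues $0$ and $m-1\equiv -1$ carry the two largest values, the residues $1$ and $-2$ the next two, and so on. Take $c^*=-\tfrac12$. The residues at circular distance $\tfrac12+i$ from $c^*$ are exactly $i$ and $-1-i$, and they carry $x_{2i}$ and $x_{2i+1}$. Since $x$ is decreasing, $x^G$ is nonincreasing in $d_{c^*}$. When $m$ is odd, the middle residue $\lfloor m/2\rfloor$ is farthest and carries $x_{m-1}$, the minimum.

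The one caveat is that $x^G$ is not literally a function of $d_{c^*}$, because $x_{2i}\ge x_{2i+1}$ may be strict. Comonotonicity survives anyway. If $d_{c^*}(p)<d_{c^*}(q)$, then $x^G_p\ge x^G_q$ and $c_I(p)\ge c_I(q)$. If $d_{c^*}(p)=d_{c^*}(q)$, then $c_I(p)=c_I(q)$ by Step 2, so the product vanishes. It therefore suffices to show that $c_I$ is genuinely circularly symmetrically decreasing about $c^*=-\tfrac12$.

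\emph{Step 2: the centre of $c_I$.} By \eqref{eq:coverage}, $c_I=\ind_I*\ind_{\{0,\dots,r-1\}}$. The arc $I=\{\sigma,\dots,\sigma+s-1\}$ is symmetrically decreasing about its midpoint $\sigma+\tfrac{s-1}2$, and $\{0,\dots,r-1\}$ about $\tfrac{r-1}2$. Both indicators are nonnegative, so Lemma~\ref{lem:conv-sym} shows that $c_I$ is circularly symmetrically decreasing about
\[
\sigma+\frac{s-1}{2}+\frac{r-1}{2}=\frac{r+s-2}{2}-\Bigl\lfloor\frac{r+s-1}{2}\Bigr\rfloor .
\]
If $r+s$ is odd, this equals $\tfrac{r+s-2}2-\tfrac{r+s-1}2=-\tfrac12$. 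If $r+s$ is even, it equals $\tfrac{r+s-2}2-\tfrac{r+s-2}2=0$. So the centre is $-\tfrac12=c^*$ in the first case and $0$ in the second.

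\emph{Step 3: the even case.} This is the main obstacle, because the centres $0$ and $-\tfrac12$ differ. The plan is to use the exact shape of $c_I$. When $r+s$ is even, the upper plateau length $H=|s-r|+1$ is odd and the lower plateau length $L=|m-s-r|+1$ has the parity of $m+1$. In this case $c_I$ is also symmetric about the seam $-\tfrac12$ once the plateaus are taken into account.

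I would check this directly on the trapezoid by comparing the level sets of $c_I$ with those of $d_{-1/2}$. The superlevel set $\{c_I\ge M-j\}$ is an arc of length $H+2j$ centred at $0$. We need each such arc to be a union of pairs $\{i,-1-i\}$, up to ties. Concretely, at each step the comparison reduces to $c_I(i)\ge c_I(-1-i)$ and $c_I(-1-i)\ge c_I(i+1)$ for $i\ge0$. These hold because $c_I$ is symmetric about $0$ and monotone in $d_0$: $d_0(-1-i)=i+1\ge d_0(i)$, and $d_0(i+1)=i+1=d_0(-1-i)$. One then verifies that these inequalities, together with the greedy order $x_{2i}\ge x_{2i+1}\ge x_{2i+2}$, give the pairwise inequality $(c_I(p)-c_I(q))(x^G_p-x^G_q)\ge0$ for all $p,q$.

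Equivalently, and more simply: the greedy placement lists positions in the order $0,-1,1,-2,2,\dots$, and along this order $d_0$ is nonincreasing-compatible, that is, $d_0$ is weakly increasing along the sequence. Hence $c_I$ is weakly decreasing along the sequence in both parity cases. Comonotonicity follows since $x^G$ is decreasing along the same sequence.

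\emph{Step 4: the inner-product identity.} Comonotonicity lets us sort the positions simultaneously in decreasing order of $x^G$ and of $c_I$. The rearrangement equality case then gives $\langle c_I,x^G\rangle=\langle c_I^\downarrow,x^\downarrow\rangle$.
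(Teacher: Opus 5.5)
Your closing argument in Step 3 (the ``Equivalently, and more simply'' paragraph), together with Steps 2 and 4, is exactly the paper's proof. Lemma~\ref{lem:conv-sym} puts the centre of $c_I$ at $-\tfrac12$ or $0$ according to the parity of $r+s$. In either case $c_I$ is weakly decreasing along $0,-1,1,-2,2,\dots$, which is the order in which \eqref{eq:greedy} places the decreasing $x$.

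Delete the scaffolding before that paragraph, because it is false when $r+s$ is even. In that case $c_I$ is not symmetric about $-\tfrac12$: for $r=s=2$ one gets $I=\{-1,0\}$, $c_I(0)=2$ and $c_I(-1)=1$. So the Step~1 reduction to genuine symmetry about $-\tfrac12$ fails, and so does its tie rule ``$d_{-1/2}(p)=d_{-1/2}(q)\Rightarrow c_I(p)=c_I(q)$''.

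The tie between $i$ and $-1-i$ is harmless for a different reason: $c_I(i)\ge c_I(-1-i)$ and $x_{2i}\ge x_{2i+1}$. Your ordering argument already encodes this.
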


\begin{proof}
The indicator $\ind_I$ is circularly symmetrically decreasing about $\sigma+(s-1)/2$, and $\ind_{\{0,\dots,r-1\}}$ is circularly symmetrically decreasing about $(r-1)/2$. Lemma~\ref{lem:conv-sym} puts the centre of $c_I$ at
\[
\sigma+\frac{s+r-2}{2}
=
\begin{cases}
-1/2,&r+s\text{ odd},\\
0,&r+s\text{ even}.
\end{cases}
\]
In the first case
\[
c_I(0)=c_I(m-1)\ge c_I(1)=c_I(m-2)\ge\cdots,
\]
while in the second case $c_I(d)=c_I(-d)$ and $c_I$ is nonincreasing in the distance from $0$. Thus $c_I$ is nonincreasing along
\[
0,\;m-1,\;1,\;m-2,\;2,\;\dots,
\]
the order in which \eqref{eq:greedy} places the decreasing sequence $x$. Hence $x^G$ and $c_I$ are comonotone.
\end{proof}

The next two sections complete the proof of Theorem~\ref{thm:B}.

\section{Superlevel sizes}

A \emph{steep circular $[\mu,M]$-trapezoid} with plateau lengths $H,L\ge1$ is a circular sequence which, up to rotation and reversal, consists of a plateau of $H$ copies of $M$, a unit-slope descent through $M-1,\dots,\mu+1$, a plateau of $L$ copies of $\mu$, and a unit-slope ascent through $\mu+1,\dots,M-1$.

For the remainder of the proof, $\mu<M$ are integers, $\Delta=M-\mu$, and $S$ is such that the steep $[\mu,M]$-trapezoid of sum $S$ exists. Writing $Q=S-m\mu$, this means
\[
H=\frac{Q}{\Delta}-\Delta+1,
\qquad
L=m-H-2(\Delta-1)
\]
are integers with $H,L\ge1$. Its decreasing rearrangement is
\[
\tau^\downarrow
=\bigl(
\underbrace{M,\dots,M}_{H},
\underbrace{M-1,M-1}_{2},\dots,
\underbrace{\mu+1,\mu+1}_{2},
\underbrace{\mu,\dots,\mu}_{L}
\bigr),
\]
and its superlevel sizes are
\begin{equation}\label{eq:n-tau}
 n_j(\tau)=|\{t:\tau(t)\ge\mu+j\}|
 =H+2(\Delta-j),
 \qquad j=1,\dots,\Delta.
\end{equation}

For an integer sequence $v:\Z/m\Z\to[\mu,M]$, set
\[
n_j(v)=|\{t:v(t)\ge\mu+j\}|,
\qquad j=1,\dots,\Delta,
\]
and write
\[
n(v)=\bigl(n_1(v),\dots,n_\Delta(v)\bigr).
\]
For $k\in\{0,\dots,m\}$, the layer-cake formula gives
\begin{equation}\label{eq:layer}
\sum_{i=1}^kv_i^\downarrow
=k\mu+\sum_{j=1}^\Delta\min(k,n_j(v)).
\end{equation}
Thus $\tau\succ v$ precisely when the right-hand side for $\tau$ dominates that for $v$ for every $k$, together with equality of the total sums.

\begin{lemma}\label{lem:clip}
Let $x,y\in\R^\Delta$ be decreasing and satisfy $x\succ y$. Then, for every $k\ge0$,
\[
\sum_{i=1}^\Delta\min(k,x_i)
\le
\sum_{i=1}^\Delta\min(k,y_i).
\]
\end{lemma}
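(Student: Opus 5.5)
The function $g_k(u)=\min(k,u)$ is concave on $\R$ for every fixed $k$. The plan is therefore to apply Karamata's inequality to the concave function $g_k$ and compare the two sums. We do not need to invoke it as a black box, because the summation-by-parts argument of Lemma~\ref{lem:pair} proves exactly what is required.

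\textbf{Main step.} Put $\psi=-g_k$, which is convex, and show $\sum_i\psi(x_i)\ge\sum_i\psi(y_i)$ whenever $x\succ y$ with both vectors decreasing. Since $x\succ y$, the vector $y$ is a convex combination of permutations of $x$ (Hardy--Littlewood--P\'olya). Applying Jensen's inequality to the convex, permutation-symmetric function $u\mapsto\sum_i\psi(u_i)$ then gives the inequality, and negating yields the claim.

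\textbf{Self-contained alternative.} We can also avoid Birkhoff's theorem by arguing directly, in the style of Lemma~\ref{lem:pair}.

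Define the slope
\[
\theta_i=\frac{g_k(x_i)-g_k(y_i)}{x_i-y_i}
\]
when $x_i\ne y_i$. When $x_i=y_i$, take $\theta_i$ to be any one-sided derivative of $g_k$ at that point, in $\{0,1\}$.

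Since $g_k$ is concave and both $x$ and $y$ are decreasing, the sequence $\theta_i$ is nondecreasing in $i$. This is the standard slope-monotonicity argument for concave functions; the subcase $x_i=y_i$ needs a consistent choice of derivative.

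Then
\[
\sum_i\bigl(g_k(x_i)-g_k(y_i)\bigr)=\sum_i\theta_i(x_i-y_i).
\]
Write $D_j=\sum_{i\le j}(x_i-y_i)$. Majorisation gives $D_j\ge0$ for $j<\Delta$ and $D_\Delta=0$. Abel summation turns the sum into
\[
-\sum_{j<\Delta}(\theta_{j+1}-\theta_j)D_j\le 0,
\]
which is the desired inequality.

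\textbf{Main obstacle.} The only delicate point is verifying that $\theta_i$ is monotone when some $x_i=y_i$ or when values coincide across indices. I would first try to sidestep this by approximating $g_k$ by smooth strictly concave functions and passing to the limit. Failing that, I would simply cite Karamata's inequality, which the paper already uses in Theorem~\ref{thm:A}.
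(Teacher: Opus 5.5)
Your proposal is correct and follows the paper's proof, which consists exactly of noting that $u\mapsto\min(k,u)$ is concave and applying Karamata's inequality to $x\succ y$; your Birkhoff--Jensen and Abel-summation arguments are simply standard proofs of Karamata itself. The worry you flag about indices with $x_i=y_i$ is harmless: those terms contribute nothing to $\sum_i\theta_i(x_i-y_i)$, so you may assign them any values that keep $(\theta_i)$ nondecreasing (for instance, consistently the right derivative).
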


\begin{proof}
The function $u\mapsto\min\{k,u\}$ is concave. Since $x\succ y$, Karamata's inequality gives
\[
\sum_{i=1}^\Delta\min\{k,x_i\}
\le
\sum_{i=1}^\Delta\min\{k,y_i\}.
\]
\end{proof}

\begin{lemma}\label{lem:full-amp}
Let $v$ be a circular Lipschitz-$1$ integer sequence with values in $[\mu,M]$, attaining both endpoints, and satisfying $\sum v=S$. Then
\[
n(v)\succ n(\tau),
\qquad\text{and consequently}\qquad
\tau\succ v.
\]
\end{lemma}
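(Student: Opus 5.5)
Our plan is to prove $n(v)\succ n(\tau)$ directly from the structure of superlevel sets of a Lipschitz sequence, and then deduce $\tau\succ v$ from \eqref{eq:layer} and Lemma~\ref{lem:clip}. Both $n(v)$ and $n(\tau)$ are decreasing in $j$ and lie in $\R^\Delta$. The total sums agree, since $\sum_j n_j(v)=\sum_t (v(t)-\mu)=S-m\mu=Q$, and likewise for $\tau$. Majorisation therefore reduces to a tail inequality. Concretely, we will show that for each $p\in\{1,\dots,\Delta\}$,
\[
\sum_{j=p}^{\Delta} n_j(v)\;\le\;\sum_{j=p}^{\Delta} n_j(\tau),
\]
which is equivalent to the head inequalities $\sum_{j\le p-1}n_j(v)\ge\sum_{j\le p-1}n_j(\tau)$ because the totals coincide.

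The key step is a lower bound on the gaps between consecutive superlevel sizes. Since $v$ attains $\mu$ and $M$ and is circular Lipschitz-$1$, we can walk around the circle from a point where $v=M$ to a point where $v=\mu$ in both directions. Each of these two arcs must pass through every intermediate value $\mu+j$ with $0<j<\Delta$, and the two arcs are disjoint apart from their endpoints. Hence the level set $\{v=\mu+j\}$ has at least $2$ elements for $1\le j\le\Delta-1$, that is, $n_j(v)-n_{j+1}(v)\ge 2$. For $\tau$ these gaps equal $2$ exactly, by \eqref{eq:n-tau}.

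From the gap bound, $n_j(v)\ge n_\Delta(v)+2(\Delta-j)$ for every $j$. To get the tails, write $h=n_\Delta(v)$. We plan to combine the inequalities in the form $n_j(v)\ge n_p(v)+2(p-j)$ for $j<p$ with the total-sum identity, which bounds the tail $T_p(v)=\sum_{j\ge p}n_j(v)$ from above. Indeed,
\[
Q=\sum_{j<p}n_j(v)+T_p(v)\ge (p-1)\,n_p(v)+(p-1)p+T_p(v),
\]
while within the tail the gaps also give $T_p(v)\le (\Delta-p+1)\,n_p(v)-(\Delta-p)(\Delta-p+1)$. For $\tau$ all these inequalities are equalities. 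Eliminating $n_p(v)$ yields a bound on $T_p(v)$ that is linear and increasing in $Q$ in the right way, and it matches $T_p(\tau)$ exactly. This elimination is the main bookkeeping to check. The cleanest route is to note that the map $x\mapsto n(\tau)$, viewed as the arithmetic progression with common difference $2$ and sum $Q$, is the unique minimiser in majorisation order among decreasing integer vectors with all gaps at least $2$ and sum $Q$. Equivalently, after subtracting $2(\Delta-j)$ from both sequences, $n(\tau)$ becomes the constant vector $(H,\dots,H)$, while $n(v)$ becomes a decreasing vector with the same sum. Every vector majorises the constant vector with the same sum, and adding back the common decreasing vector $2(\Delta-j)$ preserves head-sum inequalities. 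This argument is short, and we would present it in place of the elimination.

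Finally, by \eqref{eq:layer} the total sums of $\tau$ and $v$ both equal $S$. For each $k$, Lemma~\ref{lem:clip} applied to $x=n(v)\succ y=n(\tau)$ gives $\sum_j\min(k,n_j(v))\le\sum_j\min(k,n_j(\tau))$, so the partial sums of $\tau^\downarrow$ dominate those of $v^\downarrow$, which is $\tau\succ v$. The step we expect to be the main obstacle is justifying that each intermediate level is hit at least twice, in particular the disjointness of the two arcs in the circular setting. We would handle it by choosing an occurrence $t_1$ of $M$ and an occurrence $t_0$ of $\mu$ and cutting the circle at these two points into two arcs. Each arc, by the discrete intermediate value property for unit steps, contains an interior point with value $\mu+j$.
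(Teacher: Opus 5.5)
Your proposal is correct and is essentially the paper's proof. The two-arc argument gives $|\{v=\mu+j\}|\ge 2$, and subtracting the common progression $2(\Delta-j)$ so that $n(\tau)$ becomes the constant vector $H$ is exactly the paper's observation that $d_j=n_j(v)-n_j(\tau)$ is nonincreasing with zero sum, hence has nonnegative partial sums; Lemma~\ref{lem:clip} and \eqref{eq:layer} then give $\tau\succ v$. The elimination route you sketch first also works, since it yields $T_p(v)\le(\Delta-p+1)(Q/\Delta-p+1)=T_p(\tau)$, but it is unnecessary given the shorter argument.
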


\begin{proof}
Choose $p,q$ with $v(p)=M$ and $v(q)=\mu$. The two arcs joining $p$ to $q$ are internally disjoint. Along each arc, integer-valuedness and the Lipschitz-$1$ condition force every intermediate value. Hence
\[
n_j(v)-n_{j+1}(v)=|\{v=\mu+j\}|\ge2,
\qquad j=1,\dots,\Delta-1.
\]
The sequence \eqref{eq:n-tau} has consecutive differences exactly $2$, so
\[
d_j=n_j(v)-n_j(\tau)
\]
is nonincreasing. Moreover,
\[
\sum_{j=1}^\Delta n_j(v)
=\sum_t\bigl(v(t)-\mu\bigr)
=S-m\mu
=Q
=\sum_{j=1}^\Delta n_j(\tau),
\]
so $\sum_{j=1}^\Delta d_j=0$. Hence the partial sums of $(d_j)$ are nonnegative, and therefore $n(v)\succ n(\tau)$. Lemma~\ref{lem:clip} and \eqref{eq:layer} give $\tau\succ v$.
\end{proof}

\section{Smaller boxes}

\begin{lemma}[Padded comparison]\label{lem:padded}
Let $\alpha,\beta$ be nonnegative integers and $\Delta'=\Delta-\alpha-\beta\ge1$. Write
\[
Q'=Q-m\alpha,
\qquad
H'=\frac{Q'}{\Delta'}-\Delta'+1,
\]
and define
\[
n^\sharp
=\bigl(
\underbrace{m,\dots,m}_{\alpha},
\bigl(H'+2(\Delta'-j)\bigr)_{j=1}^{\Delta'},
\underbrace{0,\dots,0}_{\beta}
\bigr)\in\R^\Delta.
\]
If $H'\ge0$ and $H'+2(\Delta'-1)\le m$, then $n^\sharp\succ n(\tau)$.
\end{lemma}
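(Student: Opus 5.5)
We prove $n^\sharp\succ n(\tau)$ directly from the definition of majorisation in $\R^\Delta$. Both vectors are already in decreasing order, so their decreasing rearrangements are the vectors themselves.

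For $n(\tau)$ this is clear from \eqref{eq:n-tau}. For $n^\sharp$, the middle block is decreasing with steps of $2$. Its first entry is $H'+2(\Delta'-1)\le m$ and its last entry is $H'\ge0$. So the concatenation $(m,\dots,m,\text{middle},0,\dots,0)$ is decreasing, and it suffices to compare partial sums of the vectors as written.

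\emph{Equal totals.} We compute
\[
\sum_j n^\sharp_j=m\alpha+\sum_{j=1}^{\Delta'}\bigl(H'+2(\Delta'-j)\bigr)=m\alpha+\Delta'(H'+\Delta'-1).
\]
By the definition of $H'$, $\Delta'(H'+\Delta'-1)=Q'$. Hence the total is $m\alpha+Q'=Q$. The same computation with $\alpha=\beta=0$ shows $\sum_j n_j(\tau)=\Delta(H+\Delta-1)=Q$.

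\emph{Partial sums via a slope argument.} Let $d_j=n^\sharp_j-n_j(\tau)$. Since $\sum_j d_j=0$, it is enough to show that $d$ is \emph{unimodal-decreasing} in the following sense: nonincreasing up to some index and then constant, or more simply that the partial sums $D_k=\sum_{j\le k}d_j$ form a concave function of $k$ vanishing at $k=0$ and $k=\Delta$. A concave function with these endpoint values is nonnegative on $[0,\Delta]$, which is exactly the majorisation.

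Concavity of $D$ means that $d_j$ is nonincreasing in $j$. We check this on the three blocks.
\begin{itemize}
\item[(i)] On the first $\alpha$ indices, $d_j=m-n_j(\tau)$. Since $n_j(\tau)$ increases by $2$ at each step down in $j$, $d_j$ increases by $2$ per step; that is, $d$ is \emph{increasing} here. So concavity fails, and the slope argument must be modified on this block.
\item[(ii)] On the middle block, $n^\sharp$ and $n(\tau)$ both drop by $2$ per step, so $d$ is constant there.
\item[(iii)] On the last $\beta$ indices, $d_j=-n_j(\tau)$, which increases by $2$ per step.
\end{itemize}

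So $d$ is increasing on the two outer blocks and constant in the middle, and concavity of $D$ is false in general. The correct plan is therefore to bound $D_k$ block by block.

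For $k\le\alpha$, each $d_j=m-n_j(\tau)\ge0$ because $n_j(\tau)\le n_1(\tau)\le m$. Hence $D_k\ge0$.

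For $k\ge\Delta-\beta$, write $D_k=-\sum_{j>k}d_j=\sum_{j>k}n_j(\tau)\ge0$.

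For $\alpha\le k\le\Delta-\beta$, $D_k$ is affine in $k$, since $d$ is constant on the middle block. An affine function that is nonnegative at both endpoints $k=\alpha$ and $k=\Delta-\beta$ is nonnegative in between.

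The only real content is the nonnegativity of the outer-block increments, which rests on $n_j(\tau)\le m$ and $n_j(\tau)\ge0$. The main point to watch is the bookkeeping showing that the middle difference is constant. This uses that both profiles have slope exactly $-2$ per index, regardless of the shift from $H$ to $H'$. The hypotheses $H'\ge0$ and $H'+2(\Delta'-1)\le m$ are used only to guarantee that $n^\sharp$ is decreasing, so that its partial sums are the Ky Fan sums.
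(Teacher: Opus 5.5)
Your proof is correct. It uses the same three-regime split as the paper ($k\le\alpha$, $k\ge\Delta-\beta$, and the middle block), but it handles the middle regime differently.

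The paper computes $P^\sharp(p)$ and $P^\tau(p)$ in closed form. It then expands them, using $Q=\Delta(H+\Delta-1)$ and $m=H+L+2\Delta-2$, to get identity \eqref{eq:E}, and checks that each of its four terms is nonnegative.

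You instead observe that on the middle block both profiles have slope exactly $-2$. So $d_j=H'-H-2\beta$ is constant there and $D_k$ is affine. Nonnegativity then reduces to the two endpoint values $D_\alpha=\alpha(L+\alpha-1)$ and $D_{\Delta-\beta}=\beta H+\beta(\beta-1)$. You bound these using only $0\le n_j(\tau)\le m$.

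In fact \eqref{eq:E} is precisely the linear interpolation $(\Delta'-t)D_\alpha+tD_{\Delta-\beta}$. Your route therefore explains why the paper's expansion has that shape and avoids the algebra. It also needs no sign analysis of $\alpha(1-\alpha)(t-\Delta')$ or $\beta(\beta-1)$. The paper's route, in exchange, gives an explicit formula for the gap at every $p$.

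In a final write-up, delete the concavity detour. It proposes a plan, shows it fails, and retracts it, which only obscures the clean block-by-block argument that follows.
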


\begin{proof}
Both vectors are decreasing and sum to $Q$. Write $P^\sharp(p)$ and $P^\tau(p)$ for the sums of their first $p$ coordinates. If $p\le \alpha$, then
\[
P^\sharp(p)-P^\tau(p)=p(L+p-1)\ge0.
\]
If $p\ge \alpha+\Delta'$, then $P^\sharp(p)=Q\ge P^\tau(p)$; this also handles $\Delta'=1$.

Suppose $\Delta'\ge2$ and $\alpha<p<\alpha+\Delta'$. Put $t=p-\alpha$. Then
\begin{align*}
P^\sharp(p)
&=\alpha m+\frac{t(Q-m\alpha)}{\Delta'}+t\Delta'-t^2,\\
P^\tau(p)
&=(\alpha+t)\left(\frac{Q}{\Delta}+\Delta-\alpha-t\right).
\end{align*}
Using $Q=\Delta(H+\Delta-1)$ and $m=H+L+2\Delta-2$, a direct expansion gives
\begin{equation}\label{eq:E}
\Delta'\bigl(P^\sharp(p)-P^\tau(p)\bigr)
=Ht\beta+L\alpha(\Delta'-t)+t\beta(\beta-1)+\alpha(1-\alpha)(t-\Delta').
\end{equation}
Every term on the right is nonnegative: the first two are immediate, $\beta(\beta-1)\ge0$ for integer $\beta\ge0$, and the last term vanishes for $\alpha\in\{0,1\}$ and is nonnegative for $\alpha\ge2$. Hence $P^\sharp(p)\ge P^\tau(p)$.
\end{proof}

\begin{lemma}[Smaller box]\label{lem:smaller-box}
Let $\tau$ be the steep $[\mu,M]$-trapezoid of sum $S$, and let $v$ be a circular Lipschitz-$1$ integer sequence with the same sum. Set
\[
\mu'=\min_tv(t),
\qquad
M'=\max_tv(t).
\]
If $[\mu',M']\subsetneq[\mu,M]$, then $\tau\succ v$.
\end{lemma}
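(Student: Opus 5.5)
Set $\alpha=\mu'-\mu$ and $\beta=M-M'$, so that $\Delta'=M'-\mu'=\Delta-\alpha-\beta$. The strategy is to compare $v$ first with the steep trapezoid on the smaller box $[\mu',M']$, through its superlevel vector, and then to compare that padded vector with $n(\tau)$ using Lemma~\ref{lem:padded}.

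\textbf{Case $\Delta'=0$.} Here $v$ is constant, equal to $S/m$. A constant vector is majorised by every vector with the same sum, so $\tau\succ v$ holds immediately.

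\textbf{Case $\Delta'\ge1$.} Regard $v$ as an integer sequence with values in $[\mu,M]$. Its superlevel sizes are
\[
n_j(v)=m \text{ for } j\le\alpha,\qquad n_j(v)=0 \text{ for } j>\alpha+\Delta',
\]
and for the middle indices $n_{\alpha+i}(v)=n'_i(v)$, where $n'$ denotes the superlevel vector of $v$ relative to the box $[\mu',M']$.

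The argument of Lemma~\ref{lem:full-amp} transfers verbatim to this box, because $v$ attains both $\mu'$ and $M'$.
\begin{itemize}
\item Consecutive differences of $n'(v)$ are at least $2$.
\item $\sum_i n'_i(v)=S-m\mu'=Q-m\alpha=Q'$.
\end{itemize}
Let $\nu_i=H'+2(\Delta'-i)$. Then $d_i=n'_i(v)-\nu_i$ is nonincreasing with total sum $0$. Its partial sums are therefore nonnegative, and so $n'(v)\succ\nu$.

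This argument does not need $H'$ to be an integer. It uses only the arithmetic vector $\nu$ with difference $2$ and sum $Q'$, and the computation of Lemma~\ref{lem:full-amp} goes through unchanged. Prepending $\alpha$ copies of $m$ and appending $\beta$ zeros preserves majorisation, since both vectors are decreasing. Hence $n(v)\succ n^\sharp$.

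We must check the hypotheses of Lemma~\ref{lem:padded}.
\begin{itemize}
\item $H'+2(\Delta'-1)=\nu_1\le n'_1(v)\le m$. This holds because $d_1\ge0$, as $d$ is nonincreasing with sum $0$.
\item $H'=\nu_{\Delta'}\ge n'_{\Delta'}(v)\ge 1>0$. This holds because $d_{\Delta'}\le0$ and $v$ attains $M'$.
\end{itemize}
Lemma~\ref{lem:padded} then gives $n^\sharp\succ n(\tau)$. By transitivity, $n(v)\succ n(\tau)$, and Lemma~\ref{lem:clip} together with \eqref{eq:layer} yields $\tau\succ v$.

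The total sums agree, and every vector involved has length $\Delta$, so both majorisation steps are legitimate.

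The main point needing care is Lemma~\ref{lem:padded} itself, which the paper has already established. Beyond that, the only delicate items are the two hypothesis checks above: the bounds on $H'$ come for free from the extremal indices of the nonincreasing difference sequence $d$.
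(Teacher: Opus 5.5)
Your proof is correct and follows the paper's argument: the same local superlevel vector on $[\mu',M']$, the same nonincreasing zero-sum sequence $d_j$, the same padding by $\alpha$ copies of $m$ and $\beta$ zeros, and the same appeal to Lemma~\ref{lem:padded}. The only differences are cosmetic. You get the hypotheses on $H'$ directly from $d_1\ge0\ge d_{\Delta'}$, where the paper sums pointwise bounds on $n_j^{\mathrm{loc}}(v)$. You also chain the two majorisations by transitivity, where the paper applies Lemma~\ref{lem:clip} twice.
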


\begin{proof}
Let $\Delta'=M'-\mu'$, $\alpha=\mu'-\mu$, $\beta=M-M'$, and $Q'=S-m\mu'$. If $\Delta'=0$, then $v$ is constant, and every vector with the same sum majorises $v$.

Assume $\Delta'\ge1$ and set
\[
n_j^{\mathrm{loc}}(v)=|\{t:v(t)\ge\mu'+j\}|,
\qquad j=1,\dots,\Delta'.
\]
The two-arc argument in Lemma~\ref{lem:full-amp} gives
\[
n_j^{\mathrm{loc}}(v)-n_{j+1}^{\mathrm{loc}}(v)\ge2,
\qquad j=1,\dots,\Delta'-1.
\]
Since $v$ attains $\mu'$ and $M'$, we have $n_{\Delta'}^{\mathrm{loc}}(v)\ge1$ and $n_1^{\mathrm{loc}}(v)\le m-1$. Therefore, for $j=1,\dots,\Delta'$,
\[
1+2(\Delta'-j)
\le n_j^{\mathrm{loc}}(v)
\le m-1-2(j-1).
\]
Summing these inequalities over $j$ gives
\[
\Delta'^2\le Q'=\sum_{j=1}^{\Delta'}n_j^{\mathrm{loc}}(v)
\le\Delta'(m-\Delta').
\]
Thus, for
\[
H'=\frac{Q'}{\Delta'}-\Delta'+1,
\]
we have $H'\ge1$ and $H'+2(\Delta'-1)\le m-1$.

Let
\[
n^\sharp_{\mathrm{loc}}=(H'+2(\Delta'-1),\dots,H')
\]
and set
\[
d_j=n_j^{\mathrm{loc}}(v)-\bigl(H'+2(\Delta'-j)\bigr),
\qquad j=1,\dots,\Delta'.
\]
Then
\[
d_j-d_{j+1}
=n_j^{\mathrm{loc}}(v)-n_{j+1}^{\mathrm{loc}}(v)-2
\ge0,
\qquad j=1,\dots,\Delta'-1,
\]
so $d_1\ge\cdots\ge d_{\Delta'}$. Moreover,
\[
\sum_{j=1}^{\Delta'}d_j
=Q'-\bigl(\Delta'H'+\Delta'(\Delta'-1)\bigr)
=0
\]
by the definition of $H'$. A decreasing sequence with zero sum has nonnegative partial sums, and hence
\[
n^{\mathrm{loc}}(v)\succ n^\sharp_{\mathrm{loc}}.
\]
Define
\[
\widetilde n(v)
=\bigl(
\underbrace{m,\dots,m}_{\alpha},
 n_1^{\mathrm{loc}}(v),\dots,n_{\Delta'}^{\mathrm{loc}}(v),
\underbrace{0,\dots,0}_{\beta}
\bigr).
\]
Since both local vectors are decreasing and take values in $[0,m]$, adjoining the same initial block of $m$'s and terminal block of $0$'s preserves majorisation. Therefore Lemmas~\ref{lem:clip} and~\ref{lem:padded} give, for every $k$,
\[
\sum_{j=1}^\Delta\min(k,\widetilde n_j(v))
\le
\sum_{j=1}^\Delta\min(k,n_j^\sharp)
\le
\sum_{j=1}^\Delta\min(k,n_j(\tau)).
\]
The vector $\widetilde n(v)$ is the superlevel vector of $v$ measured from the baseline $\mu$. By \eqref{eq:layer}, this is $\tau\succ v$.
\end{proof}

\begin{theorem}[Lipschitz isoperimetry]\label{thm:C}
Let $\mu<M$ be integers, and let $\tau$ be the steep circular trapezoid with values in $[\mu,M]$, plateaus only at $\mu$ and $M$, and sum $S$. Then $\tau\succ v$ for every circular Lipschitz-$1$ integer sequence $v$ with values in $[\mu,M]$ and sum $S$.
\end{theorem}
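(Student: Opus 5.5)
The plan is to split according to the actual range of $v$. Set $\mu'=\min_t v(t)$ and $M'=\max_t v(t)$. Since $v$ takes values in $[\mu,M]$, we have $[\mu',M']\subseteq[\mu,M]$, so exactly one of two cases holds.

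In the first case, $[\mu',M']=[\mu,M]$. Then $v$ is a circular Lipschitz-$1$ integer sequence with values in $[\mu,M]$ that attains both endpoints and has $\sum v=S$. These are precisely the hypotheses of Lemma~\ref{lem:full-amp}, which gives $n(v)\succ n(\tau)$ and hence $\tau\succ v$.

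In the second case, $[\mu',M']\subsetneq[\mu,M]$. This is exactly the situation of Lemma~\ref{lem:smaller-box}, which already yields $\tau\succ v$. It covers the degenerate possibility $\Delta'=0$, where $v$ is constant, and also the case $\Delta'\ge1$ through the padded comparison of Lemma~\ref{lem:padded}.

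One bookkeeping point should be checked. The statement of Theorem~\ref{thm:C} describes $\tau$ as ``the steep circular trapezoid with values in $[\mu,M]$, plateaus only at $\mu$ and $M$, and sum $S$''. This is the object used in Section~5, so we are in the standing assumption that $H$ and $L$ are integers with $H,L\ge1$. Lemmas~\ref{lem:full-amp} and~\ref{lem:smaller-box} are stated under that same convention, so they apply verbatim.

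I expect no real obstacle here, since the substance lies in the two preceding lemmas. The only thing to verify carefully is that the hypotheses of Lemma~\ref{lem:padded} are met inside Lemma~\ref{lem:smaller-box}, namely $H'\ge0$ and $H'+2(\Delta'-1)\le m$. That proof establishes the stronger bounds $H'\ge1$ and $H'+2(\Delta'-1)\le m-1$, so both cases close and the theorem follows.

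As a final remark, applying Theorem~\ref{thm:C} to $v=c_B$ gives Theorem~\ref{thm:B}. Lemma~\ref{lem:lip} supplies the Lipschitz bound, the range $[\mu,M]$ and the sum $sr$, and the trapezoid computation in Section~3 identifies $c_I$ with $\tau$.
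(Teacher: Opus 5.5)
Your proposal is correct and is exactly the paper's argument. It splits on whether $v$ attains both $\mu$ and $M$, applies Lemma~\ref{lem:full-amp} in the first case, and applies Lemma~\ref{lem:smaller-box} in the strict-subbox case; that lemma already covers the constant case and verifies the hypotheses of Lemma~\ref{lem:padded}, so your extra bookkeeping is accurate but not needed.
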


\begin{proof}
If $v$ attains both endpoints, apply Lemma~\ref{lem:full-amp}. Otherwise its actual range $[\min v,\max v]$ is a strict subbox of $[\mu,M]$, so Lemma~\ref{lem:smaller-box} applies.
\end{proof}

\begin{proof}[Proof of Theorem~\ref{thm:B}]
Lemma~\ref{lem:lip} shows that $c_B$ is a circular Lipschitz-$1$ integer sequence with values in $[\mu,M]$ and sum $sr$. The identities displayed after the description of $c_I$ show that $c_I$ is exactly the steep circular $[\mu,M]$-trapezoid of sum $sr$. Hence Theorem~\ref{thm:C} gives $c_I\succ c_B$.
\end{proof}

This completes the proof of Theorem~\ref{thm:B}, hence of Theorem~\ref{thm:A}, and therefore of Theorem~\ref{thm:main}.

\section*{Acknowledgements}

P.I.\ acknowledges partial support from NSF CAREER grant DMS-2152401,
NSF grant DMS-2554183, a Simons Fellowship, and a Humboldt Research Fellowship
for Experienced Researchers.
 The authors subsequently checked all mathematical arguments in detail and edited and refined the resulting proof for inclusion in the final manuscript. The authors take full responsibility for the correctness of all statements and proofs appearing in the paper. A complete transcript of the conversation with Grok is available at \url{https://grok.com/share/c2hhcmQtNA_73c57e1f-2ff7-467b-a325-250dcd6b9e96}.

\end{document}